\newtheorem{precor}{{\bf Corollary}}
\newenvironment{cor}{\begin{precor}{\hspace{-0.5
               em}{\bf.\ }}}{\end{precor}}
\newtheorem{precon}{{\bf Conjecture}}
\newenvironment{con}{\begin{precon}{\hspace{-0.5
               em}{\bf.\ }}}{\end{precon}}
\newtheorem{predefin}{{\bf Definition}}
\newenvironment{defin}[1]{\begin{predefin}{\hspace{-0.5
                   em}{\bf.\ }}{\rm
#1}\hfill{$\spadesuit$}}{\end{predefin}}
\newtheorem{preexm}{{\bf Example}}
\newtheorem{preappl}{{\bf Application}}
\newtheorem{prelem}{{\bf Lemma}}
\newenvironment{lem}{\begin{prelem}{\hspace{-0.5
               em}{\bf.\ }}}{\end{prelem}}
\newtheorem{preproof}{{\bf Proof.\ }}
\newenvironment{proof}[1]{\begin{preproof}{\rm
               #1}\hfill{$\blacksquare$}}{\end{preproof}}
\newtheorem{presproof}{{\bf Sketch of Proof.\ }}
\newtheorem{prethm}{{\bf Theorem}}
\newenvironment{thm}{\begin{prethm}{\hspace{-0.5
               em}{\bf.\ }}}{\end{prethm}}
\newtheorem{prealphthm}{{\bf Theorem}}
\newenvironment{alphthm}{\begin{prealphthm}{\hspace{-0.5
               em}{\bf.\ }}}{\end{prealphthm}}
\newtheorem{prealphlem}{{\bf  Lemma}}
\newtheorem{prepro}{{\bf Proposition}}
\newtheorem{preprb}{{\bf Problem}}
\def\conct[#1,#2]{\mbox {${#1} \leftrightarrow {#2}$}}
\def\dconct[#1,#2]{\mbox {${#1} \rightarrow {#2}$}}
\def\deg[#1,#2]{\mbox {$d_{_{#1}}(#2)$}}
\def\mindeg[#1]{\mbox {$\delta_{_{#1}}$}}
\def\maxdeg[#1]{\mbox {$\Delta_{_{#1}}$}}
\def\outdeg[#1,#2]{\mbox {$d_{_{#1}}^{^+}(#2)$}}
\def\minoutdeg[#1]{\mbox {$\delta_{_{#1}}^{^+}$}}
\def\maxoutdeg[#1]{\mbox {$\Delta_{_{#1}}^{^+}$}}
\def\indeg[#1,#2]{\mbox {$d_{_{#1}}^{^-}(#2)$}}
\def\minindeg[#1]{\mbox {$\delta_{_{#1}}^{^-}$}}
\def\maxindeg[#1]{\mbox {$\Delta_{_{#1}}^{^-}$}}
\def\isdef{\mbox {$\ \stackrel{\rm def}{=} \ $}}
\def\dre[#1,#2,#3]{\mbox {${\cal E}_{_{#3}}(#1,#2)$}}
\def\pdre[#1,#2,#3]{\mbox {${\cal P}_{_{#3}}(#1,#2)$}}
\def\var[#1,#2]{\mbox {${\rm Var}_{_{#1}}(#2)$}}
\def\ls[#1]{\mbox {$\xi^{^{#1}}$}}
\def\hom[#1,#2]{\mbox {${\rm Hom}({#1},{#2})$}}
\def\onvhom[#1,#2]{\mbox {${\rm Hom^{v}}(#1,#2)$}}
\def\onehom[#1,#2]{\mbox {${\rm Hom^{e}}(#1,#2)$}}
\def\core[#1]{\mbox {$#1^{^{\bullet}}$}}
\def\cay[#1,#2]{\mbox {${\rm Cay}({#1},{#2})$}}
\def\cays[#1,#2]{\mbox {${\rm Cay_{s}}({#1},{#2})$}}
\def\dirc[#1]{\mbox {$\stackrel{\rightarrow}{C}_{_{#1}}$}}
\def\cycl[#1]{\mbox {${\bf Z}_{_{#1}}$}}
\begin{document}
\footnotetext[1]{This research was in part supported by
a grant from research institute for ICT.}
\begin{center}
{\Large \bf On the Biclique cover of the complete graph}\\
\vspace*{0.5cm}
{\bf Farokhlagha Moazami and Nasrin Soltankhah}\\
{\it Department of Mathematics} \\
{\it Alzahra University}\\
{\it Vanak Square 19834 Tehran, I.R. Iran}\\
{\tt f.moazami@alzahra.ac.ir}\\
{\tt soltan@alzahra.ac.ir}\\
%%%%%%%%%%%%%%%%%%%%%%%%%%%%%%%%%
%%%%%%%%%%%%%%%%%%%%%%%%%%%%%%%%%
\end{center}
\begin{abstract} Let $K$ be a set of $k$ positive integers. A biclique cover of type $K$ of a graph $G$ is a collection of complete bipartite subgraphs of $G$ such that for every edge $e$ of $G$, the number of bicliques need to cover $e$  is a member of $K$. If $K=\{1,2,\ldots, k\}$ then the maximum number of the vertices of a complete graph that admits a biclique cover of type $K$ with $d$ bicliques, $n(k,d)$, is the maximum possible cardinality of a $k$-neighborly family of standard boxes in $\mathbb{R}^d$. In this paper, we obtain an upper bound for $n(k,d)$. Also, we show that the upper bound can be improved in some special cases. Moreover, we show that the existence of the biclique cover of type $K$ of the complete bipartite graph with a perfect matching removed is equivalent to the existence of a cross $K$-intersection family.
\begin{itemize}
\item[]{{\footnotesize {\bf Key words:}\ biclique cover- multilinear polynomial- cross $K$-intersection families.}}
%%%%%%%%%%%%%%%%%%%%%%%%%%%%%%
%%%%%%%%%%%%%%%%%%%%%%%%%%%%%%
\item[]{ {\footnotesize {\bf Subject classification:} 05C70.}}
\end{itemize}
\end{abstract}
\section{Introduction}
Throughout the paper, we consider only the simple graph. Let $V(G)$ and $E(G)$ denote the vertex set and edge set of the graph $G$. As usual we will use the symbol $[n]$ to denote the set $\{1,2, \ldots, n\}$. By a {\it biclique} we mean the complete bipartite graph. We will denote by $K_{m,m}^-$ the complete bipartite graph with a perfect matching removed. A {\it biclique cover} of a graph $G$ is a collection of bicliques of $G$ such that each edge of $G$ belongs to at least one of the bicliques. In the literature, there are several ways to define a biclique cover problem for different purposes, see \cite{alon, bc1, bollobas2, bllobas1, bc2, haji2, haji, jukna, bcthes, frac}. In this paper we consider the biclique cover of order $k$ and in general of type $K$ that was introduced by Alon~\cite{alon}.
\begin{defin}{Let $K$ be a set of $k$ positive integers. We say that a biclique cover of the graph $G$ is of type $K$ if for every edge $e$ of the graph $G$, the number of bicliques that cover $e$ is an element of the set $K$.
}
\end{defin}
The number of bicliques in a biclique cover is the {\it size} of the cover. We write $n(K,d)$ for the maximum number of vertices of a complete graph  that admits a  biclique cover of type $K$ and size $d$.
In the aforementioned  definition  if we take the set $K=\{1,2,\ldots, k\}$ then this biclique cover is called of {\it order} $k$. Alon~\cite{alon} used the concept of the biclique cover and provided a relation between the biclique cover of order $k$ and size $d$ of the complete graph and $k$-neighborly family of standard boxes in $\mathbb{R}^d$, (see \cite{alon} for more details.) The following result is due to N. Alon~\cite{alon}.
\begin{alphthm}\label{alon}{\em \cite{alon}} Let $d$ be a positive integer and $1\leq k\leq d$ then

  \begin{enumerate}
                \item $d+1=n(1,d)\leq n(2,d)\leq \cdots \leq n(d-1,d)\leq n(d,d)= 2^d.$
                \item ($\frac{d}{k})^k\leq\prod_{i=0}^{k-1}(\lfloor\frac{d+i}{k}\rfloor+1)\leq n(K,d)\leq\sum_{i=0}^{k}2^i{d \choose i} < 2(\frac{2ed}{k})^k.$
              \end{enumerate}
\end{alphthm}
To prove the upper bound in the second relation he constructed multilinear polynomials that are linearly independent. The proof can be easily extended to the biclique cover of type $K$. In this paper, we add some other linearly independent multilinear polynomials and obtain a slightly improvement bound for the mentioned upper bound.
\begin{thm}\label{koli} For $1 \leq k \leq d$, assume that $K$ is a set of $k$ positive integers. Then
$$n(K,d)\leq 2^k{d\choose k}+\sum_{i=1}^{k-1}2^i{d-1 \choose i-1}.$$
\end{thm}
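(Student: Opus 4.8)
The plan is to recast Alon's argument for an arbitrary type $K$, recover his upper bound, and then adjoin a block of monomials whose size is precisely the gap between his bound and the one claimed here. First I would encode the cover. Let $B_1,\dots,B_d$ be the bicliques, with parts $X_j,Y_j$, and to each vertex $v$ attach the point $p_v\in\{0,1\}^{2d}$ with coordinates $x_j^v=1$ if $v\in X_j$ (else $0$) and $z_j^v=1$ if $v\in Y_j$ (else $0$); note $x_j^vz_j^v=0$. In the $2d$ variables $x_1,\dots,x_d,z_1,\dots,z_d$ set $L_v=\sum_{j=1}^d\bigl(z_jx_j^v+x_jz_j^v\bigr)$, so that $L_v(p_u)$ is the number $f(u,v)$ of bicliques covering the edge $uv$; hence $f(u,v)\in K$ for $u\ne v$ and $f(v,v)=0$. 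With $P_v=\prod_{\ell\in K}(L_v-\ell)$ we get $P_v(p_u)=0$ for $u\ne v$ and $P_v(p_v)=\prod_{\ell\in K}(-\ell)=:c_0\ne0$. Reducing each $P_v$ modulo the relations $w^2=w$ and $x_jz_j=0$ (which hold at every $p_u$) yields a representative $\tilde P_v$ that is a combination of admissible monomials — products of distinct variables containing at most one of $x_j,z_j$ for each $j$ — of degree at most $k$. These monomials span a space $U$ with $\dim U=\sum_{i=0}^k 2^i{d\choose i}$, and the diagonal values force the $\tilde P_v$ to be linearly independent, recovering the bound of Theorem~\ref{alon}.

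To sharpen this I would expose the internal structure of $\tilde P_v$. Writing $L_v=\sum_{w\in M_v}w$ with $M_v=\{z_j:v\in X_j\}\cup\{x_j:v\in Y_j\}$ and multilinearising, one obtains
\[ \tilde P_v=\sum_{m=0}^k\beta_m\,e_m(M_v), \]
where $e_m(M_v)$ is the degree-$m$ elementary symmetric polynomial in the variables of $M_v$ and the $\beta_m$ depend only on $K$, with $\beta_0=c_0\ne0$ and $\beta_k=k!\ne0$. Now fix the index $1$ and let $\mathcal B$ be the set consisting of the monomial $1$ together with all admissible monomials of degree between $1$ and $k-1$ that avoid both $x_1$ and $z_1$; then $|\mathcal B|=1+\sum_{i=1}^{k-1}2^i{d-1\choose i}$. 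The crucial claim is that $\{\tilde P_v:v\in[n]\}\cup\mathcal B$ is linearly independent in $U$. Granting it, one gets $n+|\mathcal B|\le\dim U$, and the identity ${d\choose i}-{d-1\choose i}={d-1\choose i-1}$ turns $\dim U-|\mathcal B|$ into exactly $2^k{d\choose k}+\sum_{i=1}^{k-1}2^i{d-1\choose i-1}$, which is the asserted bound.

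The independence of this enlarged family is the heart of the matter, and I expect it to be the main obstacle. I would argue by contradiction: a nontrivial dependence amounts to a relation $\sum_v a_v\tilde P_v=Q$ in which $Q$ lies in the span of $\mathcal B$, that is, $Q$ has degree at most $k-1$ and involves neither $x_1$ nor $z_1$. Evaluating at $p_u$ and using the diagonal property pins $a_u=Q(p_u)/c_0$. Comparing the degree-$k$ parts of the two sides removes the top term and gives the polynomial identity $\sum_v a_v e_k(M_v)=0$; reading off the coefficient of each admissible $k$-monomial $\prod_{w\in T}w$ yields $\sum_{v:\,T\subseteq M_v}a_v=0$ for every admissible $k$-set $T$, while evaluating the same identity at $p_u$ and using that exactly $f(u,v)$ of the variables of $M_v$ equal $1$ at $p_u$ gives the system $\sum_{v\ne u}a_v{f(u,v)\choose k}=0$ for all $u$.

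The $B_1$-avoidance of $Q$ supplies the extra leverage in degrees below $k$: the monomials of $\sum_v a_v\tilde P_v$ that contain $x_1$ or $z_1$ must cancel, producing further vanishing sums of the $a_v$ over the vertices whose sets $M_v$ contain prescribed variable patterns through index $1$. The remaining task — and the genuinely delicate one — is to combine these relations with the combinatorial constraints of the cover, namely that every edge is covered and that each $f(u,v)$ lies in $K$, and to conclude that $a\equiv0$, whence $Q=0$ and the coefficients of the monomials of $\mathcal B$ vanish as well. When $k=1$ this is immediate: the evaluation already makes all $a_u$ equal, and the single degree-one relation then forces that common value to be $0$ because no part $X_j$ or $Y_j$ is empty. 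For $k\ge2$ the $B_1$-avoidance becomes essential, and controlling the resulting inclusion-type system attached to the sets $M_v$ is where the real work of the proof will lie.
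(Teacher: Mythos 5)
Your setup---the points $p_v$, the polynomials $P_v=\prod_{\ell\in K}(L_v-\ell)$, the multilinear reduction into the space $U$ of admissible monomials, and the count $\dim U=\sum_{i=0}^{k}2^i{d\choose i}$---is exactly the paper's, and your dimension arithmetic is right: adjoining $\sum_{i=0}^{k-1}2^i{d-1\choose i}$ further linearly independent polynomials would give precisely the claimed bound. The gap is the independence of your particular family $\{\tilde P_v\}\cup\mathcal B$: you explicitly leave it open (``where the real work of the proof will lie''), and in fact, as stated, it is false. If some vertex $v$ lies in at most $k-1$ of the bicliques, none of which is the first biclique $B_1$, then $M_v$ has at most $k-1$ elements and avoids $x_1,z_1$, so $\tilde P_v=\sum_m\beta_m e_m(M_v)$ is itself a linear combination of monomials of $\mathcal B$, and the enlarged family is dependent. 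Concretely: take $K=\{1,2\}$, $d=2$, and cover $K_3$ on $\{u_1,u_2,u_3\}$ by $B_1=(\{u_2\},\{u_3\})$ and $B_2=(\{u_1\},\{u_2,u_3\})$. Every edge is covered exactly once, so this is a type-$K$ cover of size $2$, yet $\tilde P_{u_1}=(z_2-1)(z_2-2)\equiv 2-2z_2$ lies in the span of $\mathcal B=\{1,x_2,z_2\}$. A proof of the theorem must handle an arbitrary type-$K$ cover (nothing is known a priori about the structure of extremal ones), so fixing index $1$ cannot work; and even if the avoided index were chosen adaptively, you would still be left with exactly the unproved inclusion-type system that you yourself identify as the heart of the matter.

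The paper resolves this point by making the opposite choice: it adjoins the monomials $Q_{(M,N)}=\prod_{i\in M}x_i\prod_{i\in N}y_i$ over disjoint pairs with $d\in N$ and $|M\cup N|\le k$---the same number, $\sum_{i=0}^{k-1}2^i{d-1\choose i}$, but every one of them \emph{contains} the variable $y_d$ (and degree-$k$ monomials are allowed). A common variable is what makes independence provable by pure evaluation: all the $Q$'s vanish at any point whose $y_d$-coordinate is $0$, so substituting $(v_{B_i},v_{A_i})$ kills the coefficient $\alpha_i$ of every vertex with $d\notin A_i$, and substituting $(v_{B_i},v_{A_i})$ with the $d$-th coordinate of $v_{A_i}$ reset to $0$ kills the rest---this reset changes no surviving $P_j$, because $d\in A_j$ forces $d\notin B_j$. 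Afterwards the $Q$'s, being distinct monomials, are independent on their own. Note also why the degenerate vertices above cannot hurt the paper's family: each $P_i$ has nonzero constant term $\prod_{l\in K}(-l)$, while every $Q_{(M,N)}$ vanishes at the origin, so no $P_i$ can fall into their span. Your family, built to avoid a variable rather than to contain one, cannot be annihilated by any substitution, which is precisely why your argument bogs down; the repair is to replace $\mathcal B$ by a family sharing a common variable, i.e., essentially the paper's choice.
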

Also, it was shown by Alon~\cite{alon} that for $K=\{2,4,\ldots, 2i\}$ there exists a biclique cover $H_1,H_2,\ldots,H_d$ of type $K$ for the complete graph on $(1+{d \choose 2}+ {d\choose 4}+ \cdots+ {d\choose 2i})$ vertices which improves the lower bound in Theorem~\ref{alon}. To see this, assume that the vertices of the complete graph is denoted by all subsets of cardinality $0,2,4,\ldots,2i$ of $[d]$. Construct the biclique cover $H_1,H_2, \ldots, H_d$ which $H_i$ has $(X_i,Y_i)$ as vertex classes  such that $X_i$ is the set of all subsets that contain $i$ and $Y_i$ is the set of all subsets that do~not contain $i$. These results are far from being optimal, but the above construction motivated us to define the following definition.
\begin{defin}{
Suppose that $\{G_1, \ldots, G_d \}$ is a biclique cover of the graph $G$ where $G_i$ has $(X_i, Y_i)$ as its vertex set. If for every $1\leq i \leq d$, $X_i\cup Y_i= V(G)$, then this biclique cover is called a {\it regular biclique cover}.}
\end{defin}
We will denote by $n_r(K,d)$ the maximum possible cardinality of the vertices of a complete graph such that there exists a regular biclique cover of type $K$ and size $d$. We prove the following upper bound of  $n_r(K,d)$.
\begin{thm}\label{regular} If $K$ is a set of $k$ positive integers, then
$$n_r(K,d)\leq 2^{k-1} {d \choose k}+ 2^{k-1}{d-1 \choose k}+2^k-1.$$
\end{thm}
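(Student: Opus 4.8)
The plan is to carry the multilinear--polynomial method behind Theorem~\ref{koli} over to the regular setting, where the extra hypothesis $X_i\cup Y_i=V(G)$ is exactly what produces the sharper leading constant $2^{k-1}$ in place of $2^{k}$. I would first fix the algebraic encoding: given a regular biclique cover $G_1,\dots,G_d$ of the complete graph on $\{v_1,\dots,v_n\}$, attach to each vertex $v$ the indicators $a_i(v)=1$ if $v\in X_i$ and $b_i(v)=1$ if $v\in Y_i$ (all other values $0$). Regularity says precisely that $a_i(v)+b_i(v)=1$ for every $i$ and every $v$, and the number of bicliques covering an edge $\{u,v\}$ is the affine form $f(u,v)=\sum_{i=1}^{d}\big(a_i(u)b_i(v)+b_i(u)a_i(v)\big)$, which equals $0$ when $u=v$ and lies in $K=\{k_1,\dots,k_k\}$ when $u\neq v$.

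The heart of the argument is the usual vanishing construction. For each vertex $v$ set $P_v(x)=\prod_{j=1}^{k}\big(f(x,v)-k_j\big)$, a polynomial of degree at most $k$ in the $2d$ variables $x=(a_1,b_1,\dots,a_d,b_d)$, since $f(\cdot,v)$ is affine. Then $P_v(v)=\prod_{j}(-k_j)\neq 0$ while $P_v(u)=0$ for $u\neq v$, so the $P_v$ form a triangular, hence linearly independent, system. Reducing every $P_v$ modulo $a_i^2=a_i$, $b_i^2=b_i$ and $a_ib_i=0$ makes it multilinear with no monomial containing both $a_i$ and $b_i$. At this point I would invoke regularity in the form $b_i=1-a_i$ to eliminate one distinguished variable: used on a single coordinate this already collapses the degree-$k$ monomials so that they span a space of dimension only $2^{k-1}{d\choose k}+2^{k-1}{d-1\choose k}$, rather than the $2^{k}{d\choose k}$ that occurs for general covers. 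This is the source of the first two summands in the claimed bound.

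It remains to account for the additive term $2^{k}-1$, and this is where I expect the real work to lie. The clean way to organise it is to project each $P_v$ onto its degree-$k$ homogeneous part: the images all lie in the $\big(2^{k-1}{d\choose k}+2^{k-1}{d-1\choose k}\big)$-dimensional top space, so $n_r(K,d)$ is at most that dimension plus the dimension of the space of linear combinations $\sum_v c_vP_v$ whose degree-$k$ part vanishes. One must then show that this space of low-degree combinations has dimension at most $2^{k}-1$, equivalently that only $2^{k}-1$ of the available low-degree multilinear monomials are genuinely forced by the $P_v$ while all the others may be appended to $\{P_v\}$ without destroying independence. Establishing this cap of $2^{k}-1$ is the main obstacle: it is precisely here that the regularity relations $a_i+b_i=1$ on all coordinates (not just the distinguished one) have to be exploited, in order to control the interaction between the top parts of the $P_v$ and their lower-degree tails. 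Once that bound is in hand, comparing the total count of linearly independent functions with the dimension of the ambient multilinear space gives $n_r(K,d)\le 2^{k-1}{d\choose k}+2^{k-1}{d-1\choose k}+2^{k}-1$, and the remaining dimension bookkeeping is routine.
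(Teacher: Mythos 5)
Your first two steps---the indicator encoding, the polynomials $P_v$, their triangularity at the points coming from the cover, and the multilinear reduction that avoids monomials containing both $a_i$ and $b_i$---coincide with the paper's framework (this is the proof of Theorem~\ref{koli} with $a,b$ in place of $x,y$), and your count that eliminating the single variable $b_d$ shrinks the degree-$k$ stratum to dimension $2^{k-1}{d\choose k}+2^{k-1}{d-1\choose k}$ is numerically correct. The proposal fails, however, at exactly the point you yourself flag as ``the main obstacle'': the claim that the space of combinations $\sum_v c_v P_v$ whose degree-$k$ part vanishes has dimension at most $2^k-1$ is never proved, and no strategy for proving it is offered beyond the remark that regularity ``has to be exploited'' there. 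This cap is the entire content of the theorem beyond the top-degree count: the ambient space of degree-$(\le k-1)$ multilinear polynomials has dimension on the order of $2^{k-1}{d\choose k-1}$, so nothing generic forces a bound as small as $2^k-1$, and it is not even clear that the claim is true as stated. As it stands, the proposal is a plan whose decisive step is missing.

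Moreover, the paper's proof cannot be used to patch this step, because it proceeds by a different mechanism: instead of bounding the kernel of the top-degree projection, it enlarges the linearly independent family. To the $P_i$ it adjoins (i) the degree-$k$ monomials $Q_{(M,N)}$ with $M\cap N=\emptyset$, $d\in N$, $|M\cup N|=k$ (there are $2^{k-1}{d-1\choose k-1}$ of them), and (ii) the polynomials $R^i_{(I,J)}$ of Lemma~\ref{poly}, each a monomial times the affine factor $\sum_{j\notin J,\,j\le d-i}x_j+\sum_{j\notin I,\,j\le d-i}y_j-(d-i)$. Regularity is used exactly once and very concretely: since $A_l$ and $B_l$ partition $[d]$, that affine factor vanishes at every evaluation point $(v_{B_l},v_{A_l})$, so the $R$-polynomials can be added without disturbing the triangular structure; a four-step substitution argument then gives independence of the whole family, and the bound follows by subtracting $|{\cal A}_k|+|{\cal B}|$ from the ambient dimension $\sum_{i=0}^{k}2^i{d\choose i}$. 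The summand $2^k-1$ in the statement is bookkeeping coming from $|{\cal B}|=\sum_{j=1}^{k-1}2^j{d\choose j}-2^k+2$; it is not a bound on any kernel, so your intermediate claim does not follow from the paper's argument either.

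Finally, your closing instinct---that the relations $b_i=1-a_i$ should be used on \emph{all} coordinates---actually completes a proof if applied to the $P_v$ themselves rather than to the kernel claim. By regularity every evaluation point $(v_{B_j},v_{A_j})$ lies on the subcube $y=\mathbf{1}-x$, so one may substitute $y_i=1-x_i$ for every $i$; the restricted $P_v$ are still triangular at these points, hence linearly independent, and they are multilinear of degree at most $k$ in only $d$ variables. This yields $n_r(K,d)\le\sum_{i=0}^{k}{d\choose i}$, which is at most---and in general well below---the bound of Theorem~\ref{regular}. (This is the classical bound for binary codes with $k$ distinct distances, which is exactly what a regular cover of type $K$ is: one checks $|A_i\,\triangle\,A_j|\in K$ for all $i\ne j$.) In other words, the $2^k-1$ difficulty you ran into is an artifact of eliminating only one variable instead of all of them.
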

 It is interesting to consider the biclique cover of type $K$ of other graphs besides the complete graph. Assume that $K$ is a set of $k$ positive integers and $X$ is an arbitrary set of $d$ points. Suppose that ${\cal A} = \{A_1, A_2, \ldots , A_m \}$ and ${\cal B} = \{B_1, B_2, \ldots , B_m \}$ are two collections of subsets of $X$ such that $|A_i \cap B_j| \in K$ for $i \neq j$ and $|A_i \cap B_i| = 0$ for every $i$. The pair $({\cal A}, {\cal B})$ is called a cross $K$-intersection families. The following theorem shows that a cross $K$-intersection family can be
formulated in terms of biclique  cover of type $K$ of the graph $K_{m,m}^-$.
\begin{thm}\label{cross} Let $K = \{l_1, l_2, \ldots,l_k  \}$, there exists a cross $K$-intersection families with $m$ blocks on a set of $d$ points if and only if there exists a biclique cover of type $K$ and size $d$ of $K_{m,m}^-$.
 \end{thm}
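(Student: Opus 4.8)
The plan is to establish a direct dictionary between the two objects in which the $d$ points of the ground set correspond bijectively to the $d$ bicliques of the cover. Throughout I write the bipartition of $K_{m,m}^-$ as $U=\{u_1,\ldots,u_m\}$ and $W=\{w_1,\ldots,w_m\}$, so that its edge set is exactly $\{\{u_i,w_j\}:i\neq j\}$, the $m$ matching edges $\{u_i,w_i\}$ being the removed ones. I identify the ground set $X$ of $d$ points with the index set $\{1,\ldots,d\}$ labelling the bicliques.

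For the forward direction, suppose $(\mathcal{A},\mathcal{B})$ is a cross $K$-intersection family. For each point $t\in X$ I define a biclique $G_t$ of $K_{m,m}^-$ with parts $P_t=\{u_i:t\in A_i\}\subseteq U$ and $Q_t=\{w_j:t\in B_j\}\subseteq W$, taking all edges between $P_t$ and $Q_t$. Two checks are then needed. First, $G_t$ must actually be a subgraph of $K_{m,m}^-$: the only way it could fail is by containing a removed edge $\{u_i,w_i\}$, which happens precisely when $t\in A_i\cap B_i$; but $|A_i\cap B_i|=0$ for all $i$ rules this out. Second, the edge $\{u_i,w_j\}$ lies in $G_t$ exactly when $t\in A_i\cap B_j$, so the number of bicliques covering it is $|A_i\cap B_j|$, which for $i\neq j$ is a member of $K$ by hypothesis. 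Hence $\{G_t\}_{t\in X}$ is a biclique cover of type $K$ and size $d$.

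For the converse, suppose $G_1,\ldots,G_d$ is a biclique cover of type $K$ and size $d$ of $K_{m,m}^-$. I first record a structural observation: since $K_{m,m}^-$ is bipartite with parts $U,W$, each biclique $G_t$ having at least one edge must have one part entirely inside $U$ and the other entirely inside $W$, for otherwise two vertices of the same part lying on the same side would force a nonexistent same-side edge. So I may write $G_t=(P_t,Q_t)$ with $P_t\subseteq U$ and $Q_t\subseteq W$, and set $A_i=\{t:u_i\in P_t\}$ and $B_j=\{t:w_j\in Q_t\}$ as subsets of $X=\{1,\ldots,d\}$. Then $|A_i\cap B_j|$ equals the number of bicliques $G_t$ with $u_i\in P_t$ and $w_j\in Q_t$. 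For $i\neq j$ this counts exactly the bicliques covering the edge $\{u_i,w_j\}$, so $|A_i\cap B_j|\in K$; for $i=j$ the pair $\{u_i,w_i\}$ is a removed edge, so no biclique can contain both $u_i\in P_t$ and $w_i\in Q_t$, giving $|A_i\cap B_i|=0$. Thus $(\mathcal{A},\mathcal{B})$ is a cross $K$-intersection family with $m$ blocks on $d$ points.

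The construction itself is routine bookkeeping; the step that most needs care is the converse's structural observation that every biclique can be aligned with the fixed bipartition $(U,W)$, together with the clean identification of the ``removed matching edge'' condition with the ``$|A_i\cap B_i|=0$'' condition. This last correspondence is precisely what makes $K_{m,m}^-$, rather than the full $K_{m,m}$, the right graph for encoding cross $K$-intersection families.
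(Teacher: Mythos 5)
Your proof is correct and follows essentially the same construction as the paper's: the same dictionary between points of the ground set and bicliques, with $A_i$, $B_i$ recording which bicliques contain vertex $i$ on each side. The only difference is that you spell out two details the paper treats as implicit (that the constructed bicliques avoid the removed matching edges, and that in the converse each biclique's parts align with the bipartition $(U,W)$), which is a welcome but minor tightening.
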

In \cite{snevily} Snevily made the following conjecture.
\begin{con}\label{snevily}{\em \cite{snevily}} Let ${\cal A} = \{A_1,A_2, \ldots ,A_m \}$ and ${\cal B} = \{B_1,B_2, \ldots, B_m \}$ be two collections of subsets of an $d$-element set. Let $K = \{l_1, l_2, \ldots , l_k \}$ be a collection
of $k$ positive integers. Assume that for $i \neq j$ we have $|A_i \cap B_j| \in K$ and
that $|A_i \cap B_i|=0$, then $$ m \leq {d \choose k}.$$
\end{con}
 By Theorem~\ref{cross}, we can state the above conjecture in terms of the biclique cover as follows. The maximum number of the vertices in each part of a complete bipartite graph with a perfect matching removed that admits a biclique cover of size $d$ is at most ${d \choose k}$. Note that this bound is sharp by taking all $k$-element subsets of $[d]$ as ${\cal A}$ and all $(d-k)$-element subsets of $[d]$ as ${\cal B}$.
In \cite{Chen} William Y.C. Chen and Jiuqiang Liu  proved the following theorem.
\begin{alphthm}\label{tt}{\em \cite{Chen}}
Let $p$ be a prime number and $K = \{l_1, l_2, \ldots , l_k\} \subseteq \{1, 2, \ldots , p - 1 \}$. Assume that ${\cal A} = \{A_1, A_2, \ldots, A_m \}$ and ${\cal B} = \{B_1, B_2, \ldots, B_m \}$ are two collections of subsets of $X$ such that $|A_i \cap B_j | (mod p) \in K$ for $i \neq j$ and $|A_i \cap B_i| = 0$ for every $i$. If $max l_j < min \{|A_i | (mod p) | 1 \leq i \leq m \}$, then $$ m \leq {d-1 \choose k} + {d-1 \choose k-1} + \cdots + {d-1 \choose k-2r+1},$$
where $r$ is the number of different set sizes in ${\cal A}$.
\end{alphthm}
Clearly, for a prime number $p$ greater than $d$, and $r=1$ the above theorem is true and the following corollary is straightforward.
\begin{cor}\label{corolary}{\em \cite{Chen}} Let $K = \{l_1, l_2, \ldots , l_k\}$ be a set of $k$ positive integers and $max \ l_j < s$. Suppose that ${\cal A} = \{A_1, A_2, \ldots , A_m \}$ and ${\cal B} = \{B_1, B_2, \ldots , B_m \}$ are two collections of subsets of $[d]$ such that $|A_i \cap B_j| \in K$ for $i \neq j$ and $|A_i \cap B_i| = 0$ for every $i$. If either ${\cal A}$ is $s$-uniform or ${\cal B}$ is $s$-uniform, then $$ m \leq {d \choose k}.$$
\end{cor}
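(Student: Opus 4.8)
The plan is to deduce this corollary directly from Theorem~\ref{tt} by passing to a sufficiently large prime, at which point the modular hypotheses become vacuous and the parameter $r$ collapses to $1$. First I would reduce to the case in which ${\cal A}$ is the $s$-uniform family: if instead ${\cal B}$ is $s$-uniform, one simply interchanges the two collections. This interchange preserves every hypothesis, since $|A_i \cap B_j| = |B_j \cap A_i|$ shows the off-diagonal intersection condition is symmetric, and the diagonal condition $|A_i \cap B_i| = 0$ is symmetric as well. So it suffices to treat the case $|A_i| = s$ for all $i$.

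Next I would fix a prime $p > d$, whose existence is guaranteed (for instance by Bertrand's postulate). Because every $A_i$ and $B_j$ is a subset of $[d]$, each intersection satisfies $|A_i \cap B_j| \leq d < p$, so reduction modulo $p$ leaves the intersection sizes unchanged; in particular $|A_i \cap B_j| \pmod p = |A_i \cap B_j| \in K$ for $i \neq j$. Moreover $K \subseteq \{1, 2, \ldots, p-1\}$, since $\max l_j < s \leq d < p$ and the $l_j$ are positive. Finally, $s$-uniformity of ${\cal A}$ gives $|A_i| \pmod p = s$ for every $i$, so $\min\{|A_i| \pmod p : 1 \leq i \leq m\} = s > \max l_j$. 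Thus all the hypotheses of Theorem~\ref{tt} are satisfied.

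Since ${\cal A}$ is $s$-uniform, the number $r$ of distinct set sizes occurring in ${\cal A}$ equals $1$. Setting $r = 1$ in the conclusion of Theorem~\ref{tt} makes the lower summation index equal to $k - 2r + 1 = k - 1$, so the bound reduces to the two-term sum
$$m \leq {d-1 \choose k} + {d-1 \choose k-1}.$$
Pascal's identity ${d-1 \choose k} + {d-1 \choose k-1} = {d \choose k}$ then yields $m \leq {d \choose k}$, which is exactly the asserted bound.

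There is essentially no genuine obstacle here: the whole argument hinges on the elementary observation that taking intersection cardinalities modulo a prime exceeding $d$ is vacuous, together with the fact that uniformity forces $r = 1$ and hence collapses the summation in Theorem~\ref{tt} to a single invocation of Pascal's rule. The only point demanding a moment's attention is the reduction, via the symmetry noted above, to the case where it is ${\cal A}$ rather than ${\cal B}$ that is $s$-uniform; everything else is routine verification of hypotheses.
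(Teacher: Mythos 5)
Your proposal is correct and follows exactly the route the paper intends: the paper derives this corollary from Theorem~\ref{tt} by taking a prime $p > d$ (so the modular conditions are vacuous) and noting that uniformity forces $r=1$, whereupon Pascal's identity collapses the two-term bound to $\binom{d}{k}$. Your write-up merely makes explicit the routine verifications (the symmetry swap when ${\cal B}$ is the uniform family, and the check that $K \subseteq \{1,\ldots,p-1\}$ and $\min |A_i| \bmod p = s > \max l_j$) that the paper dismisses as straightforward.
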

This corollary shows that if we have a biclique cover of type $K=\{l_1,\ldots, l_k\}$ of a complete bipartite graph with a perfect matching removed such that every vertex of this graph lies in exactly $s$ bicliques and $max \ l_j < s$. Then the maximum possible cardinality of the vertices of this graph is at most ${d \choose k}$. The structure of the rest of this paper is to prove Theorem~\ref{koli},~\ref{regular}, and \ref{cross}. The proofs based on the concept of applying linear algebra method that is used in~\cite{alon2, Chen, Liu, snevily}.
\section{Proof of Theorem~\ref{koli}}
A polynomial in $n$ variable is called {\it multilinear} if every variable has degree $0$ or $1$. Observe that when each variable in a polynomial attains values $0$ or $1$, if each variable $x_i^p$ $(p>1)$ is replaced by $x_i$, we can consider this polynomial as a multilinear polynomial. For a subset $A_i$ of $[n]$, the {\it characteristic vector} of $A_i$ is the vector $v_{A_i}=(v_1, \ldots, v_n)$, where $v_j=1$ if $j \in A_i$ and $v_j=0$ otherwise.
Let $\{H_1, H_2, \ldots, H_d \}$ be a biclique cover of type $K$ for the graph $K_n$ such that $H_i$ has $X_i$ and $Y_i$ as its vertex classes. For every $1\leq i\leq n$ define
$$ A_i:=\{ j \ | \ i \in X_j \} \,\,\,\ \& \,\,\,\ B_i:=\{ j \ | \ i \in Y_j \}.$$
Now, with each pair $(A_i,B_i)$ we associate a polynomial $P_i(x,y)$ defined by:
$$P_i(x,y)= \prod_{l_j\in K}(v_{A_i}.x + v_{B_i}.y-l_j).$$
 Where $v_{A_i}$ (resp. $v_{B_i}$)  is the characteristic vector of the set $A_i$ (resp. $B_i$), $x=(x_1, x_2, \ldots, x_d)$ and $y=(y_1, y_2, \ldots, y_d )$. The key property of these polynomials is
 \begin{equation}\label{sefr}
P_i(v_{B_i}, v_{A_i})\neq 0 \,\,\ {\rm and} \,\,\  P_i(v_{B_j}, v_{A_j})=0  \,\, {\rm for \ all} \ i \neq j,
\end{equation}
which follows immediately from this fact that $\{H_1, \ldots,H_d\}$ is a biclique cover of type $K$.
Let $${\cal A}=\{(M,N) \ | \ M,N \subseteq [d], \,\ M\cap N= \emptyset, \,\, d\in N, \,\   |M\cup N |\leq k \}.$$  It is easy to see that the cardinality of ${\cal A}$ is equal to $\sum_{i=0}^{k-1}2^{i}{d-1 \choose i}$. For every $(M,N) \in {\cal A}$, the polynomial $Q_{(M,N)}(x,y)$ is defined by $$Q_{(M,N)}(x,y)=\prod_{i\in M}x_i \prod_{i \in N} y_i.$$
Throughout the paper we set $\prod_{i\in A} x_i=\prod_{i\in A} y_i=1$ when $A$ is an empty set.
We will now show  that the polynomials in the set $${\cal P}=\{ P_i(x,y) \ | \ 1\leq i\leq n\} \cup \{ Q_{(M,N)}(x,y) \ | \ (M,N) \in {\cal A} \}$$, as the polynomials from $\{0,1\}^{2d}$ to $\mathbb{R}$, are linearly independent.
For this purpose, we set $$ \sum_{i=1}^n \alpha_i P_i(x,y) + \sum_{(M,N)\in {\cal A}}\beta_{(M,N)} Q_{(M,N)}(x,y)=0.$$
We can rewrite the above equality as follows:
\begin{equation}\label{yek}
 \displaystyle\sum_{d\in A_i} \alpha_i P_i(x,y) + \displaystyle\sum_{d\not\in A_i} \alpha_i P_i(x,y)+ \displaystyle\sum_{(M,N)\in {\cal A}}\beta_{(M,N)} Q_{(M,N)}(x,y)=0.
 \end{equation}
 The proof will be divided into $3$ steps. \\

 {\bf Step 1}.\quad  We begin by proving that for every $i$ which $d\not\in A_i$, it holds that $\alpha_i=0$. In the contrary assume that $i_0$ is a subscript such that $\alpha_{i_0}\neq0$ and $d\not\in A_{i_0}$. Substituting $(v_{B_{i_0}}, v_{A_{i_0}})$ in the equation~\ref{yek}, according to the relation \ref{sefr} and since $d\in N$, all terms in the relation~\ref{yek} but $\alpha_{i_0}P_{i_0}(v_{B_{i_0}}, v_{A_{i_0}})$ vanish. In this way, $\alpha_{i_0}P_{i_0}(v_{B_{i_0}}, v_{A_{i_0}})=0$. Finally, as $P_{i_0}(v_{B_{i_0}}, v_{A_{i_0}})\neq 0$, we have $\alpha_{i_0}=0$ which is a contradiction.\\

{\bf Step 2}.\quad  We will show that for every $i$ which $d\in A_i$, it holds that $\alpha_i=0$. According to the step $1$ we have
\begin{equation}\label{do}
 \displaystyle\sum_{d\in A_i} \alpha_i P_i(x,y) + \displaystyle\sum_{(M,N)\in {\cal A}}\beta_{(M,N)} Q_{(M,N)}(x,y)=0.
 \end{equation}
 Assume that $i_0$ is a subscript such that $\alpha_{i_0}\neq0$. Let $v'_{A_{i_0}}=v_{A_{i_0}}-(0, \ldots, 0, 1)$ and evaluate the equation~\ref{do} in $(v_{B_{i_0}}, v'_{A_{i_0}})$. As $A_i\cap B_i=\emptyset$ we have that $d \not\in B_i$ so for every $i$ in the equation~\ref{do} we have $P_i(v_{B_{i_0}}, v'_{A_{i_0}})=P_i(v_{B_{i_0}}, v_{A_{i_0}})$. From this and since $d\in N$ we conclude that $\alpha_{i_0}P_{i_0}(v_{B_{i_0}}, v_{A_{i_0}})=0$, hence $\alpha_{i_0}=0$.\\

{\bf Step 3}.\quad Note that $Q_{(M',N')}(v_{M'},v_{N'})=1$ and $Q_{(M,N)}(v_{M'},v_{N'})=0$ for any $(M,N)\in{\cal A}$ with $M\cup N \neq M'\cup N'$ and $|M\cup N| \geq |M'\cup N'|$. So all the polynomials of the set $ \{ Q_{(M,N)}(x,y) \ | \ (M,N) \in {\cal A} \}$ are linearly independent. Therefore, for every $(M,N)\in {\cal A}$ we have $\beta_{(M,N)}=0$.

We have thus proved the polynomials of the set ${\cal P}$ as the polynomials with domain $\{0,1\}^{2d}$ are linearly independent. So we can consider these polynomials as multilinear polynomials. On the other hand every  polynomial in the set ${\cal P}$ can be written as a linear combination of the multilinear monomials of degree at most $k$. Furthermore, they do~not contain any monomials that
contain both $x_i$ and $y_i$ for the same $i$. The number of such monomials are $\sum_{i=0}^k 2^i{d \choose i}$ and hence,
$$n+ \sum_{i=0}^{k-1} 2^{i}{d-1 \choose i}\leq \sum_{i=0}^{k} 2^i{d \choose i}.$$
Now, by a straightforward calculation the formula of Theorem~\ref{koli} will be achieved.
\section{Proof of Theorem~\ref{regular}}
Before embarking on the proof of Theorem~\ref{regular}, we will establish the following lemma.

\begin{lem} \label{poly}For every $1\leq i \leq d-1$, let the set ${\cal B}_i$ define as follows: $${\cal B}_i=\{(I,J)| I,J \subseteq [d-i+1], \ I\cap J= \emptyset, \  d-i+1\in I\cup J, \, |I\cup J|\neq d-i+1,  \  |I\cup J |\leq k-1 \}.$$
Let for every pair $(I,J)\in {\cal B}_i$ ,$R_{(I,J)}^i(x,y)$ denote the following polynomial
$$R_{(I,J)}^i(x,y)=\prod_{j\in I}x_j\prod_{j\in J}y_j(\sum_{j\not\in J, j\leq d-i}x_j+\sum_{j\not\in I, j\leq d-i}y_j-(d-i)).$$
Then
\begin{equation}\label{set}
{\cal B}=\{R_{(I,J)}^i(x,y) \ | \ 1\leq i\leq d-1, \,\  (I,J)\in {\cal B}_i \}
 \end{equation}
 is a set of linearly independent polynomials.
\end{lem}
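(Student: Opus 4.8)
The plan is to establish linear independence of $\mathcal{B}$ \emph{as real-valued functions on} $\{0,1\}^{2d}$, by evaluating the polynomials at the characteristic vectors of their own index sets and exhibiting a triangular pattern. The starting observation is that for $(I,J)\in\mathcal{B}_i$ the requirements $I,J\subseteq[d-i+1]$ and $d-i+1\in I\cup J$ force $\max(I\cup J)=d-i+1$; thus the superscript $i$ is completely determined by the pair $(I,J)$, and I may index the family by the disjoint pairs $(I,J)$ and order them by $|I\cup J|$.

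The central computation is the evaluation of $R^{i'}_{(I',J')}$ at $(v_I,v_J)$, where $v_I,v_J\in\{0,1\}^d$ are the characteristic vectors of $I$ and $J$. The monomial prefix $\prod_{j\in I'}x_j\prod_{j\in J'}y_j$ evaluates to $1$ exactly when $I'\subseteq I$ and $J'\subseteq J$, and to $0$ otherwise; hence $R^{i'}_{(I',J')}(v_I,v_J)=0$ unless $I'\subseteq I$ and $J'\subseteq J$. On the diagonal $(I',J')=(I,J)$ I would evaluate the linear factor: since $I\cap J=\emptyset$ one gets $\sum_{j\notin J,\,j\le d-i}(v_I)_j=|I\cap[d-i]|$ and $\sum_{j\notin I,\,j\le d-i}(v_J)_j=|J\cap[d-i]|$, so the linear factor equals
\[
|I\cap[d-i]|+|J\cap[d-i]|-(d-i).
\]
Because $\max(I\cup J)=d-i+1$ lies in exactly one of $I,J$, restricting the sums to indices $j\le d-i$ discards exactly that top element, giving $|I\cap[d-i]|+|J\cap[d-i]|=|I\cup J|-1$. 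Therefore
\[
R^{i}_{(I,J)}(v_I,v_J)=|I\cup J|-1-(d-i)=|I\cup J|-\max(I\cup J),
\]
which is nonzero precisely because the condition $|I\cup J|\ne d-i+1$ defining $\mathcal{B}_i$ rules out the only case, $I\cup J=\{1,2,\ldots,\max(I\cup J)\}$, in which $|I\cup J|=\max(I\cup J)$.

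Independence then follows by a minimal-counterexample argument. Suppose $\sum_{(I',J')}c_{(I',J')}R^{i'}_{(I',J')}\equiv 0$ with not all coefficients zero, and among the pairs with $c_{(I',J')}\ne 0$ pick one, say $(I,J)$, minimizing $|I'\cup J'|$. Evaluating the relation at $(v_I,v_J)$ annihilates every term except those with $I'\subseteq I$ and $J'\subseteq J$. Any such survivor other than $(I,J)$ satisfies $I'\cup J'\subsetneq I\cup J$ — since $I'\subseteq I$, $J'\subseteq J$ together with $|I'\cup J'|=|I\cup J|$ would force $I'=I$ and $J'=J$ by disjointness — hence has strictly smaller $|I'\cup J'|$ and, by minimality, coefficient $0$. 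Only the diagonal term survives, yielding $c_{(I,J)}\bigl(|I\cup J|-\max(I\cup J)\bigr)=0$, so $c_{(I,J)}=0$, a contradiction; thus $\mathcal{B}$ is linearly independent.

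I expect the main obstacle to be pinning down the correct test points and matching the nonvanishing of the diagonal to the combinatorial hypotheses. The restriction $j\le d-i$ in the linear factor is load-bearing: it is exactly what removes the maximal element $d-i+1$ of $I\cup J$ and turns the diagonal value into $|I\cup J|-\max(I\cup J)$, after which the hypothesis $|I\cup J|\ne d-i+1$ is precisely the condition making this value nonzero. The remaining set-theoretic verifications — that the monomial prefix acts as the indicator of $\{I'\subseteq I,\ J'\subseteq J\}$, and that equal union sizes force $(I',J')=(I,J)$ — are routine once this framework is in place.
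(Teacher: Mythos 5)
Your proof is correct and takes essentially the same approach as the paper's: substitute the characteristic vectors $(v_{I},v_{J})$ of a pair's own index sets, note that the monomial prefix annihilates every term except those with $I'\subseteq I$ and $J'\subseteq J$, and conclude by an extremal-choice triangularity argument from the nonvanishing of the diagonal term. The only minor differences are organizational: the paper orders terms by greatest superscript first and then minimum cardinality, while you order globally by $|I\cup J|$ (using the observation that the superscript is determined by the pair), and you make explicit the diagonal value $|I\cup J|-\max(I\cup J)$ that the paper merely asserts is nonzero.
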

\begin{proof}{ To prove the assertion, assume this is false and let
\begin{equation}\label{lem}
\sum_{(I,J)\in {\cal B}_1} \gamma_{(I,J)}^1R_{(I,J)}^1(x,y)+ \cdots + \sum_{(I,J)\in {\cal B}_{d-1}} \gamma_{(I,J)}^{d-1}R_{(I,J)}^{d-1}(x,y)=0
\end{equation}
be a nontrivial linear relation. Suppose that  $i_0$ is the greatest superscript and $(I_0, J_0)$ is the subscript such that has minimum cardinality in the set ${\cal B}_{i_0}$ and $\gamma_{(I_0,J_0)}^{i_0}\neq 0$. Substitute $(v_{I_0},v_{J_0})$ for $(x,y)$ in the linear relation~\ref{lem}. In view of the definition of ${\cal B}_{i}$ all terms in the linear relation~\ref{lem} but $\gamma_{(I_0,J_0)}^{i_0}R_{(I_0,J_0)}^{i_0}(v_{I_0},v_{J_0})$ vanish. Since $R_{(I_0,J_0)}^{i_0}(v_{I_0},v_{J_0})\neq 0$, we have  $\gamma_{(I_0,J_0)}^{i_0}=0$. This is a contradiction which completes the proof.
}
\end{proof}
Obviously, for every $1\leq i < d$ we have
$$ |{\cal B}_i|=\left \{\begin{array}{ll}
             \sum_{j=0}^{k-2}2^{j+1}{d-i \choose j} &  \,\,\,\ i \leq d-k+1 \\
                                                    &                     \\
             \sum_{j=0}^{d-i-1}2^{j+1}{d-i \choose j} &  \,\,\,\  i\geq d-k+2
            \end{array} \right.
.$$
 It is a well-known fact that $$\sum_{i=0}^n {m+i\choose m}={m+n+1\choose m+1}.$$
By this fact clearly,  $|{\cal B}|=\sum_{j=1}^{k-1}2^j{d \choose j}-2^k+2$. Let ${\cal A}_k=\{(M,N) \ | \ (M,N)\in {\cal A}, \  |M\cup N|=k \}$  and ${\cal B}$ is defined as Lemma~\ref{poly}. We claim that $ \{ P_i(x,y) \ | \ 1\leq i\leq n\}\cup \{Q_{(M,N)} \ | \ (M,N)\in {\cal A}_k \}$  with all the polynomials $R_{(I,J)}^i(x,y)\in {\cal B}$ remain linearly independent. Before prove the claim, we shall note that all polynomials in the set ${\cal B}$  have this property that vanish in the point $(v_{B_i},v_{A_i})$ for every $1\leq i\leq n$. Now, assume the claim  is false and let
\begin{equation}\label{se}
 \sum_{i=1}^n \alpha_i P_i(x,y) + \sum_{(M,N)\in {\cal A}_k}\beta_{(M,N)} Q_{(M,N)}(x,y)+ \sum_{i=1}^{d-1}\sum_{(I,J)\in{\cal B}_i}\gamma_{(I,J)}^i R_{(I,J)}^i(x,y)=0
 \end{equation}
 be a nontrivial linear relation.\\

{\bf Step 1}.\quad Let $i_0$ be a subscript such that $d\not\in A_{i_0}$ and $\alpha_{i_0}\neq 0$. Substitute $(v_{B_{i_0}},v_{A_{i_0}})$ for $(x,y)$ in the linear relation~\ref{se}. We know that for every $i$ and every $(I,J)\in {\cal B}_i$, $R_{(I,J)}^i(v_{B_{i_0}},v_{A_{i_0}})=0$. Also, $d\in N$ so $Q_{(M,N)}(v_{B_{i_0}},v_{A_{i_0}})=0$. Using these and by \ref{sefr} all terms in the linear relation~\ref{se} but $\alpha_{i_0} P_{i_0}(v_{B_{i_0}},v_{A_{i_0}})$ vanish. Since $P_{i_0}(v_{B_{i_0}},v_{A_{i_0}})\neq 0$, therefore $\alpha_{i_0}=0 $.\\

{\bf Step 2}.\quad According to the step $1$ we have
\begin{equation}\label{chahar}
 \displaystyle\sum_{d\in A_i}^n \alpha_i P_i(x,y)+\sum_{(M,N)\in {\cal A}_k}\beta_{(M,N)} Q_{(M,N)}(x,y)+ \sum_{i=1}^{d-1} \sum_{(I,J)\in{\cal B}_i}\gamma_{(I,J)}^iR_{(I,J)}^i(x,y)=0.
 \end{equation}

Let $i_0$ be a subscript such that $\alpha_{i_0}\neq 0$. We define $v'_{A_{i_0}}$ to be $v_{A_{i_0}}-(0,\ldots, 0,1)$. Substitute $(v_{B_{i_0}},v'_{A_{i_0}})$ for $(x,y)$ in the linear relation~\ref{chahar}. For $1\leq i\leq d-1$ and every pair $(I,J)\in {\cal B}_i$, by the definition of $R_{(I,J)}^i(x,y)$, it holds that  $R_{(I,J)}^i(v_{B_j},v_{A_j})=R_{(I,J)}^i(v_{B_j},v'_{A_j})$ for every $1\leq j \leq n$. Now, similar in the step $1$ all terms in the relation~\ref{chahar} but $\alpha_{i_0} P_{i_0}(v_{B_{i_0}},v'_{A_{i_0}})$ vanish. Since $P_{i_0}(v_{B_{i_0}},v'_{A_{i_0}})\neq 0$, therefore $\alpha_{i_0}=0$. So, we have
 \begin{equation}\label{panj}
 \sum_{(M,N)\in {\cal A}_k}\beta_{(M,N)} Q_{(M,N)}(x,y)+ \sum_{i=1}^{d-1}\sum_{(I,J)\in{\cal B}_i}\gamma_{(I,J)}^iR_{(I,J)}^i(x,y)=0.
 \end{equation}\\

{\bf Step 3}.\quad Since $d\in N$ and $ d\not\in J$ for all $(I,J)\in {\cal B}_i$, $i=2, \ldots, d-1$, if we evaluate equality~\ref{panj} in $(v_I,v_J)$ then we conclude that $\gamma_{(I,J)}=0$. Hence we have
\begin{equation}\label{shish}
 \sum_{(M,N)\in {\cal A}_k}\beta_{(M,N)} Q_{(M,N)}(x,y)+\sum_{(I,J)\in{\cal B}_1}\gamma_{(I,J)}^1R_{(I,J)}^1(x,y)=0.
 \end{equation}
Assume that $(I_0,J_0)$ is a subscript such that has minimum cardinality in the set ${\cal A}_1$ such that $\gamma_{(I_0,J_0)}\neq 0$. Substituting $(v_{I_0},v_{J_0})$ in the equation~\ref{shish} then since $|I_0\cup J_0|\leq k-1$ and $|M\cup N|=k$ all terms in \ref{shish} but $\gamma_{(I_0,J_0)}R_{(I_0,J_0)}^1(v_{I_0},v_{J_0})$ vanish. So we have $\gamma_{(I_0,J_0)}=0$
\\
{\bf Step 4}.\quad  By  independence of the polynomials in the set ${\cal P}$, each $\beta_{(M,N)}=0$, therefore the claim is true. \\

So, we have $n+2^{k-1}{d-1 \choose k-1}+\sum_{i=1}^{k-1}2^i{d\choose i}-2^k+2$ linearly independent polynomials which, as the proof of Theorem~\ref{koli}, are in the space generated by $\sum_{i=0}^{k}2^i{d \choose i}$ monomials. Hence,
 $$n(K,d)\leq 2^k {d \choose k}- 2^{k-1}{d-1 \choose k-1}+2^k-1.$$ That complete the proof of Theorem~\ref{regular}.

\section{Proof of Theorem~\ref{cross}}
 Let $K = \{l_1, l_2, \ldots,l_k  \}$, and $({\cal A},{\cal B})$ be a cross $K$-intersection families with $m$ blocks on a set of $d$ points. For every $1\leq j\leq d$, let
$$X_j\isdef \{i|\  1\leq i \leq m, j\in A_i\},$$

$$Y_j\isdef \{i|\  1\leq i \leq m, j \in B_i\}.$$
 Now, for $j = 1,2, \ldots, d$, we construct  the complete bipartite graph $G_j$ with vertex set $(X_j,Y_j)$, where $X_j$ and $Y_j$  were defined as above. Let $ij$ be an arbitrary edge of $K_{m,m}^-$, consider sets $A_i$ and $B_j$. Without loss of generality assume that $A_i\cap B_j = \{v_1, v_2, \ldots, v_l \}$, which $1\leq l \leq k$. It is not difficult to see that the edge $ij$ was covered by the graphs $G_{v_1}, G_{v_2}, \ldots, G_{v_l}$. So we have a biclique cover of type $K$ and size $d$ of $K_{m,m}^-$. Conversely let $\{G_1, G_2, \ldots, G_d \}$ be a biclique cover of type $K$ and size $d$ of the graph $K_{m,m}^-$. Assume $G_i$  has $(X_i,Y_i)$ as the vertex set. For every $1\leq j\leq m$ define
 $$A_j\isdef \{i|\  1\leq i \leq d, j\in X_i\},$$

$$B_j\isdef \{i|\  1\leq i \leq d, j \in Y_i\}.$$
 Let ${\cal A}=\{ A_1, \ldots, A_m\}$ and ${\cal B}=\{ B_1, \ldots, B_m\}$. Since $X_i \cap Y_i = \varnothing$ for every $1\leq i\leq d$ then $A_j\cap B_j = \varnothing$ for every $1\leq j\leq m$. Also, if $\{ G_{v_1}, G_{v_2}, \ldots, G_{v_l} \}$ is the set of graphs that cover the edge $ij$ then $A_i\cap B_j = \{ v_1, v_2, \ldots, v_l \}$ where $|A_i\cap B_j| \in K$. Hence $({\cal A}, {\cal B})$ is a cross $K$-intersection family.

 %%%%%%%%%%%%%%%%%%%%%%%%%%%%%%%%%%%%%%%%%%%%%%%%%%%%%%%%%%%%%%%%%%%%%%%%%%%%%%%%%%%%%%%%%%%%%%%%%%%%%%%%%%%%%%%%
 {\bf Acknowledgment}\\ The authors wish  to express their tanks to Professor
Hossein Hajiabolhassan for the useful conversations during the preparation of the paper.

 %%%%%%%%%%%%%%%%%%%%%%%%%%%%%%%%%%%%%%%%%%%%%%%%%%%%%%%%%%%%%%%%%%%%%%%%%%%%%%%%%%%%%%%%%%%%%%%%%%%%%%%%%%%%%%%%
%reference------------------------------------------------------------------------

%\bibliographystyle{plain}

%\bibliography{bibref}
\def\cprime{$'$}

\end{document}